\documentclass[10pt,twoside,a4paper]{article}

\usepackage{graphicx}


\usepackage[utf8]{inputenc}

\usepackage[T1]{fontenc}
\usepackage{lmodern}
\usepackage{bm}
\usepackage{amsmath}
\numberwithin{equation}{section}

\usepackage{amssymb}

\usepackage{microtype}

\usepackage[boxed]{algorithm2e}

\usepackage{subfigure}

\usepackage[backend=biber,maxnames=10,backref=true,hyperref=true,giveninits=true,safeinputenc]{biblatex}
\addbibresource{AspFriSch21.bib}

\DefineBibliographyStrings{english}{%
	backrefpage = {cited on page},
	backrefpages = {cited on pages},
}

\DeclareFieldFormat[report]{title}{``#1''}
\DeclareFieldFormat[book]{title}{``#1''}
\AtEveryBibitem{\clearfield{url}}
\AtEveryBibitem{\clearfield{note}}

\usepackage{graphicx}
\graphicspath{{images/}}
\usepackage{epstopdf}

\usepackage{tikz}
\usepackage{pgfplots}
\pgfplotsset{compat=newest}

\title{Data driven reconstruction using frames and Riesz bases}
\author{Andrea Aspri$^{1}$\\{\footnotesize\href{mailto:andrea.aspri@unipv.it}{andrea.aspri@unipv.it}}
\and Leon Frischauf$^{2}$\\{\footnotesize\href{mailto:leon.frischauf@univie.ac.at}{leon.frischauf@univie.ac.at}}
\and Yury Korolev$^{3}$\\{\footnotesize\href{mailto:y.korolev@maths.cam.ac.uk}{y.korolev@maths.cam.ac.uk}}
\and Otmar Scherzer$^{2,4}$\\{\footnotesize\href{mailto:otmar.scherzer@univie.ac.at}{otmar.scherzer@univie.ac.at}}}

\date{\today}

\usepackage[pdftex,colorlinks=true,linkcolor=blue,citecolor=green,urlcolor=blue,bookmarks=true,bookmarksnumbered=true]{hyperref}
\hypersetup
{
    pdfauthor={Aspri et al.},
    pdfsubject={Subject},
    pdftitle={Data driven reconstruction using frames and Riesz bases},
    pdfkeywords={Keywords}
}


\usepackage[hyperref,amsmath,thmmarks]{ntheorem}
\usepackage{aliascnt}

\newtheorem{lemma}{Lemma}[section]

\newaliascnt{proposition}{lemma}
\newtheorem{proposition}[proposition]{Proposition}
\aliascntresetthe{proposition}

\newaliascnt{corollary}{lemma}

\aliascntresetthe{corollary}

\newaliascnt{theorem}{lemma}
\newtheorem{theorem}[theorem]{Theorem}
\aliascntresetthe{theorem}

\theorembodyfont{\normalfont}
\newaliascnt{definition}{lemma}
\newtheorem{definition}[definition]{Definition}
\aliascntresetthe{definition}

\newaliascnt{assumption}{lemma}
\newtheorem{assumption}[assumption]{Assumption}
\aliascntresetthe{assumption}

\newaliascnt{notation}{lemma}

\aliascntresetthe{notation}

\newaliascnt{example}{lemma}

\aliascntresetthe{example}

\newaliascnt{remark}{lemma}
\newtheorem{remark}[remark]{Remark}
\aliascntresetthe{remark}

\theoremstyle{nonumberplain}
\theoremseparator{:}
\theoremheaderfont{\normalfont\itshape}

\theoremsymbol{\ensuremath{\square}}
\newtheorem{proof}{Proof}

\usepackage[a4paper,centering,bindingoffset=0cm,marginpar=2cm,margin=2.5cm]{geometry}

\usepackage[pagestyles]{titlesec}
\titleformat{\section}[block]{\large\sc\filcenter}{\thesection.}{0.5ex}{}[]
\titleformat{\subsection}[runin]{\bf}{\thesubsection.}{0.5ex}{}[.]

\newpagestyle{headers}%
{%
	\headrule
	\sethead%
		[\footnotesize\thepage]%
		[\footnotesize\sc Data driven reconstruction using frames and Riesz bases]%
		[]%
		{}%
		{\footnotesize\sc Data driven reconstruction using frames and Riesz bases}%
		{\footnotesize\thepage}
	\setfoot{}{}{}
}
\pagestyle{headers}

\usepackage[font=footnotesize,format=plain,labelfont=sc,textfont=sl,width=0.75\textwidth,labelsep=period]{caption}

\usepackage[boxed]{algorithm2e}

\postdisplaypenalty= 1000
\widowpenalty = 1000
\clubpenalty = 1000
\displaywidowpenalty = 1000
\setlength{\parindent}{0pt}
\setlength{\parskip}{1ex}

\usepackage{geometry}
\usepackage{float}

\usepackage{dsfont}
\newcommand{\N}{\mathds{N}}


\let\RE\Re
\let\Re=\undefined
\DeclareMathOperator{\Re}{\RE e}
\let\IM\Im
\let\Im=\undefined
\DeclareMathOperator{\Im}{\IM m}


\newcommand{\abs}[1]{\left|#1\right|}
\newcommand{\norm}[1]{\left\|#1\right\|}
\newcommand{\set}[1]{\left\{#1\right\}}
\newcommand{\inner}[2]{\left<#1,#2\right>}


\let\ii\i
\renewcommand{\i}{\mathrm i}



\usepackage{enumerate}



\renewcommand{\leq}{\leqslant}
\renewcommand{\geq}{\geqslant}
\renewcommand{\phi}{\varphi}



\let\span\relax
\DeclareMathOperator{\span}{Span}





\newcommand{\U}{\mathcal U}
\newcommand{\Y}{\mathcal Y}

\newcommand{\udag}{u^{\dagger}}

\begin{document}
\maketitle
\thispagestyle{empty}
\parbox[t]{14em}{\footnotesize
\hspace*{-1ex}$^1$ Department of Mathematics\\
University of Pavia\\
Via Ferrata, 5 - 27100 Pavia
}
\parbox[t]{12em}{\footnotesize
	\hspace*{-1ex}$^2$Faculty of Mathematics\\
	University of Vienna\\
	Oskar-Morgenstern-Platz 1\\
	A-1090 Vienna, Austria}
\parbox[t]{14em}{\footnotesize
	\hspace*{-1ex}$^3$Department of Applied Mathematics and Theoretical Physics\\
	University of Cambridge\\
	Wilberforce Road, Cambridge CB3 0WA, United Kingdom}\\
\parbox[t]{14em}{\footnotesize
\hspace*{-1ex}$^4$
Johann Radon Institute \\
for Computational
and Applied Mathematics (RICAM)\\
Altenbergerstraße 69\\
A-4040 Linz, Austria}

\begin{abstract}
We study the problem of regularization of inverse problems adopting a purely data driven approach,
by using the similarity to the method of regularization by projection. 
We provide an application of a projection algorithm, utilized and applied in frames theory, as a data driven reconstruction procedure in inverse problems, generalizing the algorithm proposed by the authors in \textit{Inverse Problems} \textbf{36} (2020), n. 12, 125009, based on an orthonormalization procedure for the training pairs. We show some numerical experiments, comparing the different methods.
\end{abstract}

\textit{Keywords and phrases}. {Data driven reconstructions, Gram-Schmidt procedure, Inverse problems, Frames, Riesz bases}
%

\textit{2010 Mathematics Subject Classification}. {65J22, 47A52, 42C15}
 
\ \\ \ \\
The authors dedicate this paper to Zuhair Nashed. Otmar Scherzer is grateful for Zuhair's long lasting mentorship, his friendship, and the personal and professional exchange with him.

\section{Introduction}\label{sec:intro}
Inverse problems are concerned with the reconstruction of an unknown quantity $u \in \U$ from its indirect measurements $y \in \Y$ which are related by the \emph{forward model} $T \colon \U \to \Y$ that describes the relationship between the quantities of interest and the measurements. 
The forward operator $T$ models the physics of data acquisition and may involve, for instance, integral transforms (such as the Radon transform, see for instance \cite{Nat01,NatWue01,Kuc13}) and partial differential equations (PDEs) (see for instance~\cite{Isa90,Sym09}). 

Previously inverse problems were considered \emph{model driven}, meaning that the physics and chemistry of the acquisition process were expressed as accurately as possible with mathematical formulas. Typically this allows to write the inverse problem as an operator equation
\begin{equation}\label{eq:Au=y}
Tu=y\;.
\end{equation}
With the rise of the area of big data, \emph{data driven} approaches (that means \emph{avoiding} modeling of the operator $T$) have emerged (see \cite{ArrMaaOktScho19}). They are attractive because they do not require the forward operator explicitly, and often yield superior visual quality of the reconstruction. Instead these approaches require a series of training data.

Currently there is no adequate theory for purely data driven regularisation 
in inverse problems, i.e. a theory in the setting when the forward operator is given only via $n$ \emph{training pairs} 
\begin{equation}\label{eq:pairs}
\set{u_i, y_i } \quad \text{such that} \quad T u_i= y_i \quad \text{for all} \quad i=1,\ldots,n.
\end{equation}
We call $\{u_i\}$ \emph{training inputs} and $\{y_i\}$ \emph{training outputs}, respectively.
In \cite{AspKorSch20} we made a first step of an analysis for purely data driven regularization by utilizing the similarity to the concept of \emph{regularization by projection}.
We demonstrated that regularisation by projection~\cite{Sei80,EngHanNeu96} and variational regularisation~\cite{SchGraGroHalLen09} can be formulated in a data driven setting and usual results such as convergence and stability can be obtained. 

As mentioned above, the proposed method in \cite{AspKorSch20} does not require the 
explicit knowledge of the operator $T$ but only information on training pairs. For practical applications and in theoretical considerations 
the training pairs need to be orthonormalized, which we implemented via Gram-Schmidt in \cite{AspKorSch20}.
This paper evaluates alternatives to the Gram-Schmidt orthonormalization, such as Householder reflections, QR decomposition, and in particular \emph{frame decompositions}. The latter can be  
analyzed along the lines of \cite{AspKorSch20} when the frame forms a \emph{Riesz basis}. 

The paper is organized as follows. In \autoref{sec:Gram_Schmidt} we review basic results from \cite{AspKorSch20}. In \autoref{sec:riesz} we review basic facts on \emph{frames} and \emph{Riesz bases}, which are applied in \autoref{sec:data_driven} to data driven regularization. Finally we present 
numerical results in \autoref{sec:simulations}.

\section{Gram-Schmidt orthonormalization as in \cite{AspKorSch20}}\label{sec:Gram_Schmidt}
In this section, we recall the main idea used in \cite{AspKorSch20} of data-driven projection methods for reconstructions. 

Let $T$ be an operator acting between Hilbert spaces, i.e., $T:\mathcal{U}\to \mathcal{Y}$. The operator $T$ is assumed to be linear, bounded and injective. 
We consider \autoref{eq:Au=y} from the introduction.

Let $\mathcal{U}_n$ and $\mathcal{Y}_n$ be finite-dimensional subspaces (of dimension $n$) of the Hilbert spaces $\mathcal{U}$ and $\mathcal{Y}$, respectively. Let $P_{H_n}$ represent the orthogonal projection operator onto $H_n$, which is either $H_n=\mathcal{U}_n$ or $H_n=\mathcal{Y}_n$.

In the sequel we denote by $\udag$ the solution of \autoref{eq:Au=y} and we assume that the 
\textit{training inputs} $\set{u_i}$ from \autoref{eq:pairs} are
linearly independent and consequently, due to the injectivity of the operator $T$, the same holds for $\set{y_i}$. 

Finally, it is assumed that $\mathcal{U}_n\subset \mathcal{U}_{n+1}$ and $\mathcal{Y}_n\subset \mathcal{Y}_{n+1}$, for all $n$ and
    \begin{equation}
        \overline{\bigcup_{n\in\N}\mathcal{U}_n}=\mathcal{U},\qquad \overline{\bigcup_{n\in\N}\mathcal{Y}_n}=\overline{{R(T)}}.
    \end{equation}

\textit{Regularization by projection} consists in approximating the solution $\udag$ of \autoref{eq:Au=y} by the \textit{minimum norm solution} of the projected equation
    \begin{equation}\label{eq: TPnu=y}
        T P_{\mathcal{U}_n}u=y,
    \end{equation}
where $P_{\mathcal{U}_n}$ is the orthogonal projection operator onto $\mathcal{U}_n$.    
The minimum norm solution of this equation is unique and is given by
    \begin{equation} \label{eq:udn2}
        u_n^{\mathcal{U}}=(T P_{{\mathcal{U}_n}})^{\dagger} y,
    \end{equation}
where $(T P_{{\mathcal{U}_n}})^{\dagger}$ denotes the Moore-Penrose inverse of the operator $T P_{{\mathcal{U}_n}}$ (\cite{Nas76}). The projection takes place in the space $\mathcal{U}$, hence the superscript in our notation $u_n^{\mathcal{U}}$.
\begin{center}
    The topic of \cite{AspKorSch20} was to find $u_n^{\mathcal{U}}$ by making use of the training pairs \autoref{eq:pairs} without explicit knowledge of the operator $T$.
\end{center}
This was numerically realized by application of the Gram-Schmidt orthonormalization procedure to the training outputs $\set{y_i}_{i=1}^{\infty}$, resulting in an orthonormal basis $\set{\underline{y_i}}_{i=1}^{\infty}$. That is     
    \begin{equation*}
    	\underline{y_i} = \frac{y_i-\sum_{k=1}^{i-1} \inner{y_i}{\underline{y_k}} \underline{y_k}}
    	                       {\|y_i-P_{\mathcal{Y}_{i-1}}y_i\|}\quad \text{for all} \quad i\in \N,
    \end{equation*}
%
        and consequently with $T \underline{u_i} = \underline{y_i}$ we get
        \begin{equation*}
            \underline{u_i} = \frac{u_i-\sum_{k=1}^{i-1}\inner{ y_i}{\underline{y_k}} \underline{u_k}}{\|y_i-P_{\mathcal{Y}_{i-1}}y_i\|}\quad \text{for all} \quad i\in \N;
        \end{equation*}
As shown in \cite{AspKorSch20}, $(T P_{{\mathcal{U}_n}})^{\dagger}=T^{-1}P_{\mathcal{Y}_n}$, hence we get the following  reconstruction formula 
        \begin{equation}\label{eq:udn2}
        \boxed{
            u_n^{\mathcal{U}}=T^{-1}P_{\mathcal{Y}_n}y
            =\sum_{i=1}^{n}\inner{y}{\underline{y_i}} \underline{u_i}
            \quad \text{ and } \quad 
            Tu_n^{\mathcal{U}}
            =\sum_{i=1}^{n}\inner{y}{\underline{y_i}} \underline{y_i}.}
        \end{equation}

\begin{remark}
We stress that this algorithm doesn't require the explicit knowledge of the operator $T$.
\end{remark}
\subsection{Weak convergence}
We recall a weak convergence result from \cite{AspKorSch20}, which is actually formulated for the orthonormalized training inputs $\set{\overline{u_i}}$ of $\set{u_i}$ via Gram-Schmidt. Moreover, we define $T\overline{u_i}=\overline{y_i}$. 

To prove weak convergence of the reconstruction formula \autoref{eq:udn2} for $n \to \infty$, 
we posed in \cite{AspKorSch20} some assumptions on  $\set{\overline{u_i},\overline{y_i}}$. This seems better suited for inverse problems, because if 
$\set{\overline{u_i}}$ is orthonormal, the sequence $\set{\norm{\overline{y_i}}}$ can be expected to converge to $0$, or in other words one may expect some decay in the coefficients of the expansions of $\set{\overline{y_i}}$.
\begin{assumption}\label{ass: prev_work}
Let
\begin{enumerate}
    \item $\sum_{i=1}^\infty|\langle \udag, \overline{u_i}\rangle|<\infty$;
    \item For every $n\in\N$ and any $i\geq n+1$ consider the following expansion $P_{\mathcal{Y}_n}\overline{y_i}=\sum_{j=1}^{n}\beta^{i,n}_j \overline{y_j}$. We assume that there exists some $C>0$ such that for every $n\in\N$ and every $i\geq n+1$, $\sum_{j=1}^{n}(\beta^{i,n}_j)^2\leq C$.
\end{enumerate}
\end{assumption}
 
\begin{theorem}[Theorems 9 and 11 in \cite{AspKorSch20}] \label{th:weak}
Let $y$ be the exact right-hand side of \autoref{eq:Au=y} and $\set{u_i,y_i}_{i=1}^{n}$ the training pairs defined in \autoref{eq:pairs}. Let \autoref{ass: prev_work} hold, then $u_n^{\mathcal{U}}$ converges weakly to $\udag$.
\end{theorem}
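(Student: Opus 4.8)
The plan is to pass to the orthonormal basis $\set{\overline{u_i}}$ of $\U$ (Gram--Schmidt applied to the training inputs), in which every object becomes explicit. Since the $\set{u_i}$ are linearly independent and $\overline{\bigcup_n \U_n}=\U$, the system $\set{\overline{u_i}}$ is a complete orthonormal system, so $\udag=\sum_{i=1}^\infty a_i\overline{u_i}$ with $a_i\defeq\inner{\udag}{\overline{u_i}}$, and $P_{\U_n}\udag=\sum_{i=1}^n a_i\overline{u_i}\to\udag$ strongly. Moreover $\Y_n=T\U_n=\span\set{\overline{y_1},\dots,\overline{y_n}}$ with $\overline{y_i}=T\overline{u_i}$, and by injectivity $T^{-1}\overline{y_j}=\overline{u_j}$ on this finite-dimensional space.

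First I would insert $y=T\udag=\sum_i a_i\overline{y_i}$ into the reconstruction formula $u_n^{\U}=T^{-1}P_{\Y_n}y$ and apply the bounded projection $P_{\Y_n}$ term by term. The terms with $i\le n$ are fixed by $P_{\Y_n}$, while for $i\ge n+1$ the second condition of \autoref{ass: prev_work} supplies $P_{\Y_n}\overline{y_i}=\sum_{j=1}^n\beta_j^{i,n}\overline{y_j}$. Collecting the coefficients of each $\overline{y_j}$ (legitimate since, for the finitely many $j\le n$, the inner series converges absolutely because $\abs{\beta_j^{i,n}}\le\sqrt{C}$ and $\sum_i\abs{a_i}<\infty$) and then applying $T^{-1}$ on the finite-dimensional $\Y_n$ yields the finite representation
\begin{equation*}
u_n^{\U}=\sum_{j=1}^n\Big(a_j+c_j^n\Big)\overline{u_j},\qquad c_j^n\defeq\sum_{i=n+1}^\infty a_i\,\beta_j^{i,n}.
\end{equation*}
Thus $u_n^{\U}=P_{\U_n}\udag+R_n$ with $R_n\defeq\sum_{j=1}^n c_j^n\overline{u_j}$, and since $P_{\U_n}\udag\to\udag$ it remains only to control $R_n$.

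The decisive estimate couples the two hypotheses. Viewing $(\beta_1^{i,n},\dots,\beta_n^{i,n})$ as a vector in $\R^n$, the second condition of \autoref{ass: prev_work} gives $\big(\sum_{j=1}^n(\beta_j^{i,n})^2\big)^{1/2}\le\sqrt{C}$; since $\set{\overline{u_j}}$ is orthonormal, the triangle inequality in $\ell^2$ then yields
\begin{equation*}
\norm{R_n}=\Big(\sum_{j=1}^n(c_j^n)^2\Big)^{1/2}\le\sum_{i=n+1}^\infty\abs{a_i}\,\Big(\sum_{j=1}^n(\beta_j^{i,n})^2\Big)^{1/2}\le\sqrt{C}\sum_{i=n+1}^\infty\abs{a_i}.
\end{equation*}
By the first condition of \autoref{ass: prev_work} the right-hand side is the tail of a convergent series, hence tends to $0$. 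Consequently $\norm{u_n^{\U}-\udag}\le\norm{P_{\U_n}\udag-\udag}+\norm{R_n}\to 0$, which is in fact stronger than the asserted weak convergence; alternatively, to obtain weak convergence directly one checks that $\set{u_n^{\U}}$ is bounded (immediate from $\norm{u_n^{\U}}^2\le 2\norm{\udag}^2+2C(\sum_i\abs{a_i})^2$) and that $\inner{u_n^{\U}}{\overline{u_k}}=a_k+c_k^n\to a_k=\inner{\udag}{\overline{u_k}}$ on the dense span of $\set{\overline{u_k}}$.

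I expect the main obstacle to be the bookkeeping around the two orthonormalizations together with the unbounded inverse: the reconstruction formula is naturally written in the output basis $\set{\underline{y_i}}$, whereas \autoref{ass: prev_work} lives in the input basis $\set{\overline{u_i}}$, so one must first confirm that $u_n^{\U}=T^{-1}P_{\Y_n}y$ is basis-independent and that applying $T^{-1}$ is justified precisely because $P_{\Y_n}y$ already lies in the finite-dimensional $\Y_n$. Once the splitting $u_n^{\U}=P_{\U_n}\udag+R_n$ is secured, the analytic heart is the single $\ell^2$ triangle-inequality estimate above, where the second condition of \autoref{ass: prev_work} prevents the projection coefficients from amplifying the $\ell^1$-small tail guaranteed by the first condition.
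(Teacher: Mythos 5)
Your argument is correct, and every step that actually needs justification is supplied: the basis-independence of $u_n^{\mathcal U}=T^{-1}P_{\mathcal Y_n}y$ (Gram--Schmidt preserves the nested spans, so $\span\set{\overline{y_j}}_{j=1}^n=\mathcal Y_n$), the interchange of summations defining $c_j^n$ (absolutely convergent since $\abs{\beta_j^{i,n}}\leq\sqrt C$ and the coefficients are in $\ell^1$), and the application of $T^{-1}$ only to elements of the finite-dimensional $\mathcal Y_n$, where injectivity makes it a well-defined bounded map despite its global unboundedness. Note that the present paper gives no proof of \autoref{th:weak}; it defers to Theorems 9 and 11 of \cite{AspKorSch20}, whose mechanism is exactly the skeleton you describe: the decomposition $u_n^{\mathcal U}=P_{\mathcal U_n}\udag+R_n$ with $R_n=\sum_{i>n}\inner{\udag}{\overline{u_i}}\,T^{-1}P_{\mathcal Y_n}\overline{y_i}$, the bound $\norm{T^{-1}P_{\mathcal Y_n}\overline{y_i}}=\bigl(\sum_{j=1}^n(\beta_j^{i,n})^2\bigr)^{1/2}\leq\sqrt C$ giving uniform boundedness of $\set{u_n^{\mathcal U}}$, and then weak convergence deduced from boundedness together with $\inner{u_n^{\mathcal U}}{\overline{u_k}}\to\inner{\udag}{\overline{u_k}}$ on the dense span of the $\overline{u_k}$ --- which is precisely the fallback argument in your last paragraph, and is also the structure the paper itself replicates for Riesz bases (\autoref{lemma:coeff} for boundedness, then the weak convergence theorem). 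The genuine difference is your main line: by keeping the tail index in the key estimate, $\norm{R_n}\leq\sqrt C\sum_{i>n}\abs{\inner{\udag}{\overline{u_i}}}$, and observing that this is a tail of the convergent series in condition \textit{(i)} of \autoref{ass: prev_work}, you combine it with $P_{\mathcal U_n}\udag\to\udag$ to obtain convergence in norm. This is strictly stronger than the weak convergence asserted in \autoref{th:weak} and costs nothing beyond the hypotheses already assumed; the cited route stops at boundedness plus a density argument, and the stated theorem follows a fortiori from either.
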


\section{Basics on Frames and Riesz-bases} \label{sec:riesz}
This section is devoted to collect some notations, utilized in the rest of the paper, and useful results on frames' theory.

In this section, $H$ represents a generic Hilbert space.
We denote by $\norm{\cdot}$ the norm induced by the inner product in $H$, denoted by $\inner{\cdot}{\cdot}$. 

\begin{definition}
	A sequence $\set{f_i}_{i=1}^\infty$ of elements in $H$ is a frame for $H$ if there exist constants $A,B>0$ such that 
		\begin{equation}\label{def:frames}
			A\|f\|^2\leq \sum_{i=1}^\infty|\inner{f}{f_i}|^2\leq B \|f\|^2\quad  \text{for all} \quad  f\in H,
		\end{equation}
where $A,B$ are called frames bounds. 
\end{definition}
It follows from the definition that if $\set{f_i}_{i=1}^\infty$ is a frame for $H$, then 
\begin{equation*}
	\overline{\textrm{span}\set{f_i}}_{i=1}^\infty=H\:.
\end{equation*}
There exist some operators associated to a frame
\begin{itemize}
	\item $F:l^2(\N)\to H$, called \textit{synthesis operator}
	\begin{equation*}
		F c_i=\sum_{i=1}^\infty c_i f_i.
	\end{equation*}
	\item $F^*:H\to l^2(\N)$, the adjoint operator of $F$, called \textit{analysis operator}
	\begin{equation*}
		F^*f=\set{\inner{f}{f_i} }_{i=1}^\infty.
	\end{equation*}
	\item $S:H\to H$, called \textit{frame operator}
	\begin{equation*}
		S= FF^*, \quad Sf=FF^*f=\sum_{i=1}^\infty\inner{f}{f_i} f_i.
	\end{equation*} 
    $S$ is bounded, invertible, self-adjoint and positive and for every $f \in H$
    \begin{equation*}
    	Sf=\sum_{i=1}^\infty \inner{f}{f_i} f_i
    \end{equation*}
    is unconditionally convergent (see \cite{Chr16}).
\end{itemize}
The following theorem follows from the properties of $S$.
\begin{theorem} \label{th:frame}
	Let $\set{f_i}_{i=1}^\infty$ be a frame with frame operator $S$. Then 	
	\begin{equation}\label{eq: rec1}
		f=\sum_{i=1}^\infty\langle f, S^{-1}f_i\rangle f_i \quad \text{for all} \quad  f \in H
	\end{equation}
	and
	\begin{equation}\label{eq: rec2}
		f=\sum_{i=1}^\infty\inner{f}{f_i} S^{-1} f_i \quad \text{for all} \quad  f \in H .
	\end{equation}
	Both series are unconditionally convergent.
\end{theorem}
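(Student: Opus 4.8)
The plan is to read off both identities directly from the two properties of the frame operator $S$ recorded just above the statement: that $S$ is bounded, self-adjoint, positive and invertible, and that $Sf=\sum_{i=1}^\infty\inner{f}{f_i}f_i$ converges unconditionally for every $f\in H$. The only genuinely new ingredient needed is that $S^{-1}$ is again self-adjoint, which follows from $(S^{-1})^*=(S^*)^{-1}=S^{-1}$.

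To obtain \eqref{eq: rec1} I would start from the trivial identity $f=S(S^{-1}f)$ and insert the series representation of $S$ evaluated at the vector $S^{-1}f$, giving $f=\sum_{i=1}^\infty\inner{S^{-1}f}{f_i}f_i$. Moving the self-adjoint operator $S^{-1}$ onto the second argument, $\inner{S^{-1}f}{f_i}=\inner{f}{S^{-1}f_i}$, produces \eqref{eq: rec1} at once. To obtain \eqref{eq: rec2} I would instead write $f=S^{-1}(Sf)=S^{-1}\bigl(\sum_{i=1}^\infty\inner{f}{f_i}f_i\bigr)$ and invoke the continuity of the bounded operator $S^{-1}$ to pass it through the convergent series, which yields $f=\sum_{i=1}^\infty\inner{f}{f_i}S^{-1}f_i$.

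For the unconditional convergence of both series, note that the series in \eqref{eq: rec1} is precisely $S(S^{-1}f)$, i.e. the frame-operator expansion evaluated at $S^{-1}f$, so it is unconditionally convergent by the stated property of $S$. The series in \eqref{eq: rec2} is the image under the bounded operator $S^{-1}$ of the unconditionally convergent series $Sf$; since a bounded linear operator maps every convergent rearrangement to a convergent rearrangement with the image as its limit, unconditional convergence is inherited. The only step requiring any care is this interchange of $S^{-1}$ with the infinite sum together with the preservation of unconditional convergence, but both reduce entirely to the continuity of $S^{-1}$, so I do not anticipate a genuine obstacle here.
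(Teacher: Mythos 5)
Your proof is correct and takes essentially the same route as the paper, which offers no explicit argument beyond noting that the theorem ``follows from the properties of $S$'' (deferring to \cite{Chr16}): your derivation via $f=S(S^{-1}f)$ and $f=S^{-1}(Sf)$, using self-adjointness and boundedness of $S^{-1}$, is precisely the standard argument being invoked. The treatment of unconditional convergence (the first series being the frame expansion at $S^{-1}f$, the second its image under the bounded operator $S^{-1}$) is also the standard one and is carried out correctly.
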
 
For our purposes we are interested in a special class of frames, that is  Riesz bases. For reader's convenience, we recall here their definition, see for more details \cite{Chr16}.
\begin{definition}[Riesz's basis]
	Let $\set{e_i}_{i=1}^\infty$ be an orthonormal basis for $H$. A Riesz basis $\set{f_i}_{i=1}^\infty$ for $H$ is a family of the form $\set{f_i}_{i=1}^\infty=\set{L e_i}_{i=1}^\infty$ where $L:H\to H$ is a bounded and bijective operator. 
\end{definition}
As a consequence of the previous definition, a Riesz basis is $\omega$-independent, that is 
	\begin{equation*}
		\sum_{i=1}^\infty c_i f_i=0\quad \Rightarrow\quad c_i=0\quad  \textrm{for all}\ \ i.
	\end{equation*}
We summarize some of the properties of Riesz basis in the following proposition.
\begin{proposition}\label{prop: Riesz basis}
	A Riesz basis $\set{f_i}_{i=1}^\infty$ for $H$ is a frame for $H$, i.e., it satisfies \autoref{def:frames}, and the Riesz basis bounds coincide with the frame bounds $A$ and $B$. Moreover 
	\begin{enumerate}
	\item $\set{f_i}_{i=1}^\infty$ and $\set{S^{-1}f_i}_{i=1}^\infty$ are biorthogonal, i.e., $\langle f_i, S^{-1}f_j\rangle =\delta_{ij}$, where $\delta_{ij}$ is the Kronecker symbol; 
	\item for each $f\in H$ there exists a unique sequence of scalars $\set{c_i}_{i=1}^\infty$ such that $f=\sum_{i=1}^\infty c_i f_i$ and $\sum_{i=1}^\infty|c_i|^2<\infty$;
	\item for every finite scalar sequence $\set{c_i}$, it holds
	\begin{equation}\label{eq:finite_sequence}
		A\sum_{i=1}^\infty |c_i|^2\leq \Big\|\sum_{i=1}^\infty c_if_i\Big\|^2\leq B\sum_{i=1}^\infty |c_i|^2.
	\end{equation} 
	\end{enumerate}
\end{proposition}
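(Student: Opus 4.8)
The plan is to push everything back to the orthonormal basis $\set{e_i}$ through the defining isomorphism $L$, since by assumption $f_i = L e_i$ with $L$ bounded and bijective. The one computation that drives the entire proposition is the identity $S = L L^*$ for the frame operator; once this is available, all of the stated properties drop out by transporting the familiar orthonormal-basis facts across $L$.

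First I would settle the frame property and, simultaneously, the lower/upper bounds. For every $f \in H$, moving $L$ to the other side of the inner product and applying Parseval for $\set{e_i}$ gives
\[
\sum_{i=1}^\infty \abs{\inner{f}{f_i}}^2 = \sum_{i=1}^\infty \abs{\inner{L^* f}{e_i}}^2 = \norm{L^* f}^2 .
\]
Because $L$ is bounded and bijective, the bounded inverse theorem makes $L^*$ bounded and boundedly invertible, so $\norm{\inv{L}}^{-1}\norm{f} \le \norm{L^* f} \le \norm{L}\norm{f}$, which yields \autoref{def:frames} with $A = \norm{\inv{L}}^{-2}$ and $B = \norm{L}^2$. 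The same constants reappear in item (iii): for a finite sequence $\set{c_i}$ one writes $\sum_i c_i f_i = L\big(\sum_i c_i e_i\big)$ and uses $\norm{\inv{L}}^{-1}\norm{g} \le \norm{Lg} \le \norm{L}\norm{g}$ together with $\big\|\sum_i c_i e_i\big\|^2 = \sum_i \abs{c_i}^2$, giving exactly the inequality \autoref{eq:finite_sequence} with these $A,B$. Hence the Riesz and frame bounds coincide.

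Next I would treat the biorthogonality in item (i). Writing out the frame operator and pulling the continuous map $L$ through the (convergent) expansion of $L^* f$,
\[
Sf = \sum_{k=1}^\infty \inner{f}{f_k} f_k = \sum_{k=1}^\infty \inner{L^* f}{e_k}\, L e_k = L \sum_{k=1}^\infty \inner{L^* f}{e_k}\, e_k = L L^* f,
\]
so $S = L L^*$ and $S^{-1} = \inv{(L^*)}\inv{L}$. Then
\[
\inner{f_i}{S^{-1} f_j} = \inner{L e_i}{\inv{(L^*)}\inv{L} L e_j} = \inner{L e_i}{\inv{(L^*)} e_j} = \inner{e_i}{e_j} = \delta_{ij}.
\]
For the existence and uniqueness of coefficients in item (ii), I would apply $\inv{L}$: given $f$, expand $\inv{L} f = \sum_i c_i e_i$ with $c_i = \inner{\inv{L}f}{e_i}$ and $\sum_i \abs{c_i}^2 = \norm{\inv{L}f}^2 < \infty$, then apply $L$ termwise to obtain $f = \sum_i c_i f_i$; uniqueness follows from the $\omega$-independence already recorded (or directly by applying $\inv{L}$ to a vanishing combination and invoking uniqueness of orthonormal expansions).

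The only genuine subtleties, and therefore the steps I would be most careful about, are the interchange of $L$ with the infinite series defining $S$ and the passage from bijectivity of $L$ to bounded invertibility of $L^*$. The former is justified by continuity of $L$ and convergence of $\sum_k \inner{L^* f}{e_k} e_k = L^* f$; the latter is the bounded inverse (open mapping) theorem. Beyond this bookkeeping I expect no real obstacle, since the proposition is essentially a transport of orthonormal-basis facts through the isomorphism $L$.
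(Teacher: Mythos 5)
Your proof is correct. Note that the paper itself offers no proof of this proposition: it is stated as a summary of known results with a pointer to \cite{Chr96,Chr16}, and your argument --- transporting the orthonormal-basis facts (Parseval, uniqueness of expansions) through the isomorphism $L$, with the identity $S = LL^{*}$ as the pivot and the bounded inverse theorem supplying invertibility of $L^{*}$ --- is precisely the standard proof found in those references, so there is no divergence of method to report. One small refinement worth making explicit: the proposition asserts that the frame bounds and the Riesz bounds \emph{coincide}, and as written you only show that one common pair of constants, $A = \norm{L^{-1}}^{-2}$ and $B = \norm{L}^{2}$, serves in both inequalities; the coincidence of the \emph{optimal} bounds follows by observing that these constants are in fact optimal on both sides, e.g.\ the optimal upper frame bound is $\norm{S} = \norm{LL^{*}} = \norm{L}^{2}$ by the $C^{*}$-identity, and likewise the optimal lower bound is $\norm{S^{-1}}^{-1} = \norm{L^{-1}}^{-2}$, which match the extremal values of $\norm{Lg}^{2}/\norm{g}^{2}$ governing the Riesz inequality.
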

Finally, as a consequence of the previous properties for Riesz basis, it holds
\begin{proposition} \label{pr:Riesz}
Let $J$ be a countable index set. Any subfamily $\set{f_i}_{i\in J}$ is a Riesz basis for its closed linear spanning set $\overline{\textrm{span}\set{f_i}}_{i\in J}$, with bounds $A$ and $B$.  	
\end{proposition}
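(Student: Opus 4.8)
The plan is to exploit the quantitative two-sided estimate for finite scalar sequences recorded in \autoref{prop: Riesz basis}, equation \autoref{eq:finite_sequence} — which is precisely the defining inequality of a Riesz sequence — and then to upgrade it to the factorization $f_i = L e_i$ required by the definition of a Riesz basis on the closed span. Write $H_J := \overline{\textrm{span}\set{f_i}}_{i\in J}$, and let $\set{c_i}_{i\in J}$ be any finitely supported scalar sequence. Extending it by zero to all of $\N$ and inserting it into \autoref{eq:finite_sequence} immediately gives
\begin{equation*}
A \sum_{i\in J} |c_i|^2 \leq \norm{\sum_{i\in J} c_i f_i}^2 \leq B \sum_{i\in J} |c_i|^2,
\end{equation*}
so the subfamily already satisfies the Riesz bounds $A$ and $B$ on all finite sequences.

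Next I would pass from this inequality to the synthesis operator $T_J \colon \ell^2(J) \to H$, $T_J\set{c_i}_{i\in J} = \sum_{i\in J} c_i f_i$. The upper bound shows that $T_J$ is well defined and bounded (with $\norm{T_J} \leq \sqrt B$) on finitely supported sequences, hence extends continuously to all of $\ell^2(J)$ by density; the lower bound shows $\norm{T_J c} \geq \sqrt A\,\norm{c}$, so $T_J$ is injective and bounded below. A standard consequence of being bounded below is that $\range{T_J}$ is closed; since it contains every $f_i = T_J \delta_i$ (with $\delta_i$ the $i$-th standard unit vector of $\ell^2(J)$) and is contained in $H_J$, it must equal $H_J$. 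Therefore $T_J \colon \ell^2(J) \to H_J$ is a bounded linear bijection.

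Finally I would recast this in the exact form demanded by the definition of a Riesz basis, i.e.\ as $\set{L e_i}$ with $L$ a bounded bijection of the space onto itself and $\set{e_i}$ an orthonormal basis. Since $T_J$ is a linear isomorphism of $\ell^2(J)$ onto $H_J$, the latter is a separable Hilbert space of the same dimension as $\ell^2(J)$ and therefore admits an orthonormal basis $\set{e_i}_{i\in J}$; let $U \colon H_J \to \ell^2(J)$ be the unitary with $U e_i = \delta_i$. Setting $L := T_J U \colon H_J \to H_J$ yields a bounded bijection with $L e_i = T_J \delta_i = f_i$, so $\set{f_i}_{i\in J} = \set{L e_i}_{i\in J}$ is by definition a Riesz basis for $H_J$; that its bounds may be taken as $A$ and $B$ is exactly the estimate displayed in the first paragraph, which is the defining Riesz inequality \autoref{eq:finite_sequence} for the subfamily. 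The only genuinely delicate point is the identification $\range{T_J} = H_J$: one must invoke that a bounded-below operator has closed range, and that this closed range coincides with the closed linear span of the generators rather than merely containing them. Everything else — the zero-extension and the unitary reindexing onto $H_J$ — is bookkeeping.
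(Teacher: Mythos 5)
Your proof is correct. The paper itself offers no argument for \autoref{pr:Riesz} --- it simply remarks that the result is a consequence of the properties in \autoref{prop: Riesz basis} and refers to Christensen's books --- and your derivation (restricting the finite-sequence bounds \autoref{eq:finite_sequence} to the zero-extended subfamily, using the resulting bounded and bounded-below synthesis operator to obtain a bijection of $\ell^2(J)$ onto the closed span, then composing with a unitary to produce the factorization $f_i = L e_i$ demanded by the definition) is precisely the standard argument that the citation points to, so you have filled in the intended proof rather than taken a different route.
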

We refer to \cite{Chr96,Chr16} for more details and some literature on the topic.

\section{Data driven regularization by frames and Riesz bases}\label{sec:data_driven}
We propose a reconstruction algorithm based on projection methods onto finite-dimensional subspaces, similar to the one discussed in \autoref{sec:Gram_Schmidt}. 
However, now, compared with Section \ref{sec:Gram_Schmidt}, we consider the case when $\set{y_i}_{i=1}^\infty$ forms 
a frame for $\mathcal{Y}$. Associated with the frame is the synthesis operator $F$ and the Frame operator $S$ on $\mathcal{Y}$. We assume that $\set{y_i}_{i=1}^{n}$ is a frame on 
$\mathcal{Y}_n:=\span \set{y_i: i=1,\ldots,n}$ as well. 
The corresponding \emph{restricted frame operator} $S_n:\mathcal{Y}_n \to \mathcal{Y}_n$ is given by $S_n y = \sum_{i=1}^{n}\langle y, y_i\rangle y_i$, and therefore because of \autoref{th:frame}
\begin{equation}\label{eq: yn}
	y=\sum_{i=1}^{n}\inner{y}{S_n^{-1}y_i} y_i \quad \text{ for all } y \in \mathcal{Y}_n.
\end{equation}
Note that for every $y \in \mathcal{Y}$, $P_{\mathcal{Y}_n}y \in \mathcal{Y}_n$ and therefore, because $\inner{P_{\mathcal{Y}_n}y}{y_j}=\inner{y}{y_j}$ we get
\begin{equation}\label{eq: projection_formula}
	P_{\mathcal{Y}_n}y=\sum_{i=1}^{n}\inner{P_{\mathcal{Y}_n}y}{S^{-1}_n y_i} y_i=\sum_{i=1}^{n}\inner{y}{S^{-1}_n y_i} y_i \quad \text{ for all } y \in \mathcal{Y}. 
\end{equation}
Using the injectivity hypothesis on the operator $T$, we find that
\begin{equation}\label{eq:reconstruction formula}
\boxed{
	u_n^{\mathcal{U}}=\sum_{i=1}^{n}\inner{y}{S_n^{-1}y_i} u_i \text{ and }
Tu_n^{\mathcal{U}}=\sum_{i=1}^{n}\inner{y}{S_n^{-1}y_i} y_i.}
\end{equation}
\begin{remark}
This is the comparable formula for the Gram-Schmidt orthonormalization procedure \autoref{eq:udn2}.
In order to identify the unknown coefficients $\inner{y}{S^{-1}_n y_i}$, we proceed as in \cite{AdcHuy19}: It follows from \autoref{eq: projection_formula} that
\begin{equation}\label{eq: coeff_expansion} 
    \inner{y}{y_j} = \inner{P_{\mathcal{Y}_n}y}{y_j} = 
	\inner{ \sum_{i=1}^{n}\inner{y}{S^{-1}_n y_i}y_i}{y_j} = \sum_{i=1}^{n}\inner{y}{S_n^{-1}y_i} \inner{y_i}{y_j}\quad \text{for all} \quad y_j\in \mathcal{Y}_n,
\end{equation}
or in other words 
\begin{equation}\label{eq:gram_matrix_eq}
	G_{\mathcal{U}_n} X=Y \text{ with } G_{\mathcal{U}_n}=\left(\inner{y_i}{y_j} \right)_{i,j=1}^{n}, \;
    X=\left(\inner{y}{S^{-1}_n y_i} \right)_{i=1}^{n} \text{ and } Y=\left(\inner{y}{y_j} \right)_{j=1}^{n}.
\end{equation}
\autoref{eq: coeff_expansion} can be implemented for reconstruction.
\end{remark}

\subsection{Weak convergence}
Following the analysis in \cite{AspKorSch20} a similar result as \autoref{th:weak} on weak convergence can be obtained for Riesz bases. In this case Gram-Schmidt orthormalization is 
replaced by the calculation of dual frame.  
\begin{assumption}
Now, we assume that the training inputs 
	\begin{equation}\label{ass:ui_Riesz_basis} 
		\set{u_i}^\infty_{i=1}\ \ \textrm{form a Riesz basis for}\ \ \mathcal{U}.
	\end{equation}
\end{assumption}
We emphasize that the assumption that the image data form a Riesz basis is only necessary 
for the theoretical analysis. In practical applications we only require the knowledge of a dual frame to implement \autoref{eq:reconstruction formula}.

As a consequence of \autoref{eq:  rec2} we see that  
\begin{equation*}
	\udag=\sum_{i=1}^\infty \inner{ \udag}{S^{-1} u_i} u_i \quad \text{ and therefore also } \quad
	y=\sum_{i=1}^\infty\inner{\udag}{S^{-1} u_i} y_i.
\end{equation*}
Then, we consider the projection onto $\mathcal{Y}_n$ that is
\begin{equation*}
	P_{\mathcal{Y}_n}y=\sum_{i=1}^\infty\inner{\udag}{S^{-1} u_i} P_{\mathcal{Y}_n}y_i
\end{equation*}
Note that in comparison with \autoref{eq: projection_formula} here the inverse of the frame operator $S$ on $\mathcal{U}$ is used, and not $S_n$ on $\mathcal{U}_n$. 

We investigate weak convergence of the \emph{Riesz bases based approximation}: 
\begin{definition}[Riesz bases based approximation]
We define 
\begin{equation}\label{eq:utilden}
	\widetilde{u}_n^\mathcal{U}:=T^{-1} P_{\mathcal{Y}_n}y = \sum_{i=1}^\infty\inner{\udag}{S^{-1} u_i} T^{-1}P_{\mathcal{Y}_n}y_i.
\end{equation}
\end{definition}
We can represent ${T^{-1}P_{\mathcal{Y}_n}y_i}$ in terms of the Riesz basis, for all $n$ and $i$, that is
	\begin{equation}\label{eq:expansion_of_Tm1_Py_n}
		{T^{-1}P_{\mathcal{Y}_n}y_i}=\sum_{j=1}^\infty c^{i,n}_j u_j. 
	\end{equation}
To study the convergence of the sequence $\widetilde{u}_n^\mathcal{U}$, we need some assumptions about the coefficients of the expansion in \autoref{eq:utilden} and \autoref{eq:expansion_of_Tm1_Py_n}.
\begin{lemma}\label{lemma:coeff}
Let $\set{u_i}_{i=1}^\infty$ be a Riesz basis. If $(\inner{\udag}{S^{-1} u_i})_{i=1}^\infty \in l^1$ and, for every $i$ and $n$, $\sum_{j=1}^\infty \abs{c^{i,n}_j}^2< C$, where $C$ is independent of $i$ and $n$, then  $\|\widetilde{u}_n^\mathcal{U}\|$ is bounded.
\end{lemma}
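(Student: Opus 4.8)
The plan is to bound $\norm{\widetilde{u}_n^\mathcal{U}}$ uniformly in $n$ by a single absolutely convergent series, exploiting the upper Riesz bound to control each summand and the $l^1$ assumption to sum them. The starting point is the representation \autoref{eq:utilden} together with the expansion \autoref{eq:expansion_of_Tm1_Py_n}.

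First I would control the individual vectors $T^{-1}P_{\mathcal{Y}_n}y_i$. Since $\set{u_j}_{j=1}^\infty$ is a Riesz basis with bounds $A$ and $B$, the upper estimate in \autoref{eq:finite_sequence} applied to the (uniformly $l^2$-bounded) coefficient sequence $(c^{i,n}_j)_{j=1}^\infty$ gives
\[
	\norm{T^{-1}P_{\mathcal{Y}_n}y_i}^2 = \Big\| \sum_{j=1}^\infty c^{i,n}_j u_j \Big\|^2 \leq B \sum_{j=1}^\infty \abs{c^{i,n}_j}^2 \leq BC,
\]
so that $\norm{T^{-1}P_{\mathcal{Y}_n}y_i} \leq \sqrt{BC}$ for every $i$ and every $n$, with the bound independent of both indices. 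This is the key point where the second hypothesis and \autoref{prop: Riesz basis} enter.

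Next I would insert this uniform bound into \autoref{eq:utilden} and take norms. The triangle inequality — valid because, by the previous step, the series converges absolutely in the complete space $\mathcal{U}$ — yields
\[
	\norm{\widetilde{u}_n^\mathcal{U}} \leq \sum_{i=1}^\infty \abs{\inner{\udag}{S^{-1} u_i}} \, \norm{T^{-1}P_{\mathcal{Y}_n}y_i} \leq \sqrt{BC} \sum_{i=1}^\infty \abs{\inner{\udag}{S^{-1} u_i}}.
\]
By the first hypothesis $(\inner{\udag}{S^{-1}u_i})_{i=1}^\infty \in l^1$, the remaining sum is a finite constant, and the resulting bound $\sqrt{BC}\,\norm{(\inner{\udag}{S^{-1}u_i})_i}_{l^1}$ does not depend on $n$. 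Hence $\norm{\widetilde{u}_n^\mathcal{U}}$ is bounded, as claimed.

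I do not expect a genuine obstacle here; the argument is essentially a two-line estimate. The only point requiring a little care is the justification that the series in \autoref{eq:utilden} converges absolutely, so that taking norms term by term is legitimate. This is precisely guaranteed by the uniform bound $\sqrt{BC}$ from the first step combined with the $l^1$ summability of the coefficients $\inner{\udag}{S^{-1}u_i}$ — the two hypotheses are exactly what make the triangle-inequality estimate both valid and independent of $n$.
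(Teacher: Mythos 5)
Your proof is correct and follows essentially the same route as the paper: both bound $\norm{T^{-1}P_{\mathcal{Y}_n}y_i}^2 \leq B\sum_{j=1}^\infty\abs{c^{i,n}_j}^2 \leq BC$ via the upper Riesz bound from \autoref{prop: Riesz basis}, and then estimate the series in \autoref{eq:utilden} by the $l^1$-norm of $(\inner{\udag}{S^{-1}u_i})_i$ times this uniform bound (the paper phrases this as H\"older's inequality, you as the triangle inequality with a supremum -- the same estimate). Your added remark on absolute convergence justifying the term-by-term estimate is a small but welcome point of rigor that the paper leaves implicit.
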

\begin{proof}
Apply H\"{o}lder inequality to \autoref{eq:utilden}, i.e.,  
\begin{equation*}
	\|\widetilde{u}_n^\mathcal{U}\|\leq \sum_{i=1}^\infty|\inner{\udag}{S^{-1} u_i}|\ \ \textrm{sup}_{i}\|T^{-1}P_{\mathcal{Y}_n}y_i\|
\end{equation*}
and, thanks to \autoref{eq:expansion_of_Tm1_Py_n} and the assumption on $\sum_{j=1}^\infty \abs{c^{i,n}_j}^2$, we have that 
\begin{equation*}
	\norm{T^{-1}P_{\mathcal{Y}_n}y_i}^2= \norm{ \sum_{j=1}^\infty c^{i,n}_j u_j }^2\leq B \sum_{j=1}^\infty \abs{c^{i,n}_j}^2<\infty.
\end{equation*}
The assertion follows using the hypothesis on the coefficients $(\inner{\udag}{S^{-1} u_i})_{i=1}^\infty$. 
\end{proof}
\begin{theorem}
 Let the hypothesis of \autoref{lemma:coeff} hold. Then the sequence \autoref{eq:utilden} is weakly convergent to $\udag$.
\end{theorem}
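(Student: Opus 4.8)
The plan is to exploit the boundedness of $\set{\widetilde{u}_n^\mathcal{U}}$ furnished by \autoref{lemma:coeff} together with a weak compactness argument, the conceptual obstacle being that $T^{-1}$ is unbounded (this is precisely the ill-posedness of the problem). One would like to argue directly that $\widetilde{u}_n^\mathcal{U}=T^{-1}P_{\mathcal{Y}_n}y$ converges to $\udag=T^{-1}y$ because $P_{\mathcal{Y}_n}y\to y$; but since $T^{-1}$ is not continuous this implication fails, and one can at best hope for weak convergence. I would therefore route the argument through the \emph{bounded} operator $T$ rather than through $T^{-1}$.

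First I would record the strong convergence $P_{\mathcal{Y}_n}y\to y$ in $\mathcal{Y}$. Since $\set{u_i}_{i=1}^\infty$ is a Riesz basis for $\mathcal{U}$ its finite linear span is dense, and continuity of $T$ gives $y=T\udag\in\overline{R(T)}=\overline{\bigcup_n\mathcal{Y}_n}$; as $\set{\mathcal{Y}_n}$ is an increasing chain of closed subspaces whose union is dense in $\overline{R(T)}$, the orthogonal projections converge strongly, so $P_{\mathcal{Y}_n}y\to P_{\overline{R(T)}}y=y$. Next, because $P_{\mathcal{Y}_n}y\in\mathcal{Y}_n=\span\set{y_1,\dots,y_n}\subseteq R(T)$, the element $T^{-1}P_{\mathcal{Y}_n}y$ is well defined and
\begin{equation*}
	T\widetilde{u}_n^\mathcal{U}=T\,T^{-1}P_{\mathcal{Y}_n}y=P_{\mathcal{Y}_n}y\longrightarrow y \quad\text{strongly in }\mathcal{Y}.
\end{equation*}

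Now I would identify the weak limit. By \autoref{lemma:coeff} the sequence $\set{\widetilde{u}_n^\mathcal{U}}$ is bounded, hence weakly sequentially precompact in the Hilbert space $\mathcal{U}$: any subsequence admits a further subsequence $\widetilde{u}_{n_k}^\mathcal{U}\weakto u^\ast$ for some $u^\ast\in\mathcal{U}$. Since $T$ is bounded it is weak-to-weak continuous, so $T\widetilde{u}_{n_k}^\mathcal{U}\weakto Tu^\ast$; but by the previous step $T\widetilde{u}_{n_k}^\mathcal{U}\to y$ strongly, and strong limits are weak limits, so by uniqueness of weak limits $Tu^\ast=y=T\udag$. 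Injectivity of $T$ then yields $u^\ast=\udag$.

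Finally I would invoke the subsequence principle: every subsequence of the bounded sequence $\set{\widetilde{u}_n^\mathcal{U}}$ has a further subsequence converging weakly, and by the previous paragraph the limit is always $\udag$; hence the full sequence satisfies $\widetilde{u}_n^\mathcal{U}\weakto\udag$. The only genuinely delicate point is the unboundedness of $T^{-1}$, which forces the detour through the bounded operator $T$ and restricts the conclusion to weak rather than strong convergence; the remaining ingredients, namely strong convergence of the projections, weak compactness of bounded sets, and weak continuity of $T$, are standard.
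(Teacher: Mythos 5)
Your proof is correct and complete: boundedness of $\set{\widetilde{u}_n^\mathcal{U}}$ from \autoref{lemma:coeff}, strong convergence $P_{\mathcal{Y}_n}y \to y$ (valid because the Riesz basis property makes $\bigcup_n \mathcal{Y}_n$ dense in $\overline{R(T)}$ and $\mathcal{Y}_n \subseteq R(T)$ keeps $T^{-1}P_{\mathcal{Y}_n}y$ well defined), weak sequential compactness, identification of every weak cluster point via the bounded injective operator $T$, and the subsequence principle. The paper itself states this theorem without an explicit proof, deferring to the analysis of \cite{AspKorSch20}, and your argument is precisely the standard route taken there, so it matches the intended approach.
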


In the next section, we show some numerical experiments and comparisons between orthonormalization procedures and the reconstruction formula \autoref{eq:reconstruction formula}. 

\section{Numerical Experiments}\label{sec:simulations}

The goal of this section is to present numerical experiments illustrating the reconstruction with \autoref{eq:reconstruction formula} and \autoref{eq: coeff_expansion} and compare it with different orthonormalization procedures, such as Gram-Schmidt originally proposed in \cite{AspKorSch20}, Householder reflections and the QR decomposition.

\textbf{General Structure of the Experiments.} For the numerical experiments, the operator $T:L^2(\Omega)\longrightarrow L^2(\mathbb R \times [0,\pi))$ is the Radon transform with a parallel beam geometry (see \cite{Kuc13}), which is the same example as considered in \cite{AspKorSch20}. The finite-dimensional training pairs are denoted in this section via the notation $\set{\vec{u}_i,\vec{y}_i}_{i=1}^n$, where $\vec{u}_i\in \mathbb{R}^k$ and $\vec{y}_i\in\mathbb{R}^h$, for some $h,k\in\mathbb{N}$. 

Each data set includes $n$ different pictures, where each picture consists of $N$ pixels represented as $N$ points, with a gray scale intensity in $[0,1]$, which can be represented as linearly independent elements of $\mathbb R^{N}$. They are used as training data and Radon transformed by the built-in MATLAB-function for $K$ different angles with $\theta_k\in [0,\pi)$. This provides $n$ different elements of $\mathbb R^{M\times K} \simeq \mathbb R^{M\cdot K}$, where $M$ is the length of a Radon projection at a specific angle. In the case of quadratic pictures with size $\sqrt{N}\times\sqrt{N}$, this implies that $M\approx \sqrt{2\cdot N},$ which is the length of the diagonal of the square. In order to receive the preimages, the orthonormal system $\{\vec{\underline{y_1}} , \ldots , \vec{\underline{y_n}}\}$ is backtransformed via the exact inverse Radon transform. This 	makes it possible to investigate the quality of the procedure by comparing these results to the validation data sets.

\subsection{Orthonormalization procedures}\label{sec:simulations_Ortho}
In this section we compare \emph{Gram-Schmidt orthonormalization procedure}, \emph{QR decomposition} and \emph{Householder reflection}
for solving \autoref{eq:udn2}.

\textbf{Gram-Schmidt method.}  \label{sec:gsm}
    The vectors defined by $c_i := \set{\vec{y}_i - \sum_{j=1}^{i-1} \langle \vec{y}_i,\vec{y}_j \rangle \vec{y}_j}_{i=1}^n$
    are an orthogonal system (see \cite{Golub96}). After normalizing, we get the orthonormal system by
    $\vec{\underline{y_i}}  := \frac{c_i}{\|c_i\|}.$  The computational effort of this algorithm is $\mathcal O(MKn^2)$. It is possible to rearrange these calculation steps to make the computational procedure more stable of numerical errors, whereas the computational effort stays the same. This algorithm can be found in \cite{Golub96} and is used here for further computations.

\textbf{QR decomposition.} \label{sec:qr}
    By writing all vectors $\vec{y}_i$ as columns of a matrix and performing the MATLAB native $QR$ decomposition, we receive an orthonormal system with the same span as the columns of the original matrix, since the columns of $Q$ represent an orthonormal system. The computational effort of this algorithm is $\mathcal O(M^2K^2 n - M K n^2 + n^3/3)$.

\textbf{Householder reflection method.}\label{sec:householder}
    By applying Householder transformations to the set, one receives an orthonormal system of vectors. The computational effort of this algorithm is $\mathcal O(M^2K^2 n - M K n^2 + n^3/3)$.

\subsubsection{Accuracy of the orthonormalization procedures}

 We investigate the stability of the algorithms on the example of our imaging application. We assume the set $\set{\vec{\underline{y_i}}}_{i=1}^n$ being available and analyse the error of the resulting orthonormal system. We first define an appropriate measure of this error, which quantifies the success of the orthonormalization procedure as a numerical value.
\begin{definition}
For a set $\set{\vec{\underline{y_i}}}_{i=1}^n \subset \mathbb R^{M\cdot K}$, which is assumed to be approximately orthonormal, we define the matrix $\underline{Y} \in \mathbb R^{n\times n }$ by
\begin{equation*}
    (\underline{Y_{i j}}) := \langle \vec{\underline{y_i}}, \vec{\underline{y_j}} \rangle.
\end{equation*}
Furthermore, we define the orthonormality error $\epsilon_\text{ortho}$ by
\begin{equation*}
\epsilon_\text{ortho} := \| \underline{Y} - I \|,
\end{equation*}
where $I$ denotes the $\mathbb R^{n\times n}$ identity matrix and $\|\cdot\|$ maximum absolute column sum of the matrix.
\end{definition}

We test the methods on the example of the Sunflower data set from \href{www.kaggle.com/alxmamaev/flowers-recognition}{www.kaggle.com/alxmamaev/flowers-recognition} with different numbers of images and plot the error $\epsilon_\text{ortho}$ over the number $n$. We furthermore investigate the impact of random permutation of the images, before the method is applied. The results can be observed in Fig.~\ref{fig:compar1}-Fig.~\ref{fig:compar3}.


\begin{figure}[H]
  \centering
  \begin{minipage}[b]{0.3\textwidth}
    \includegraphics[width=\textwidth]{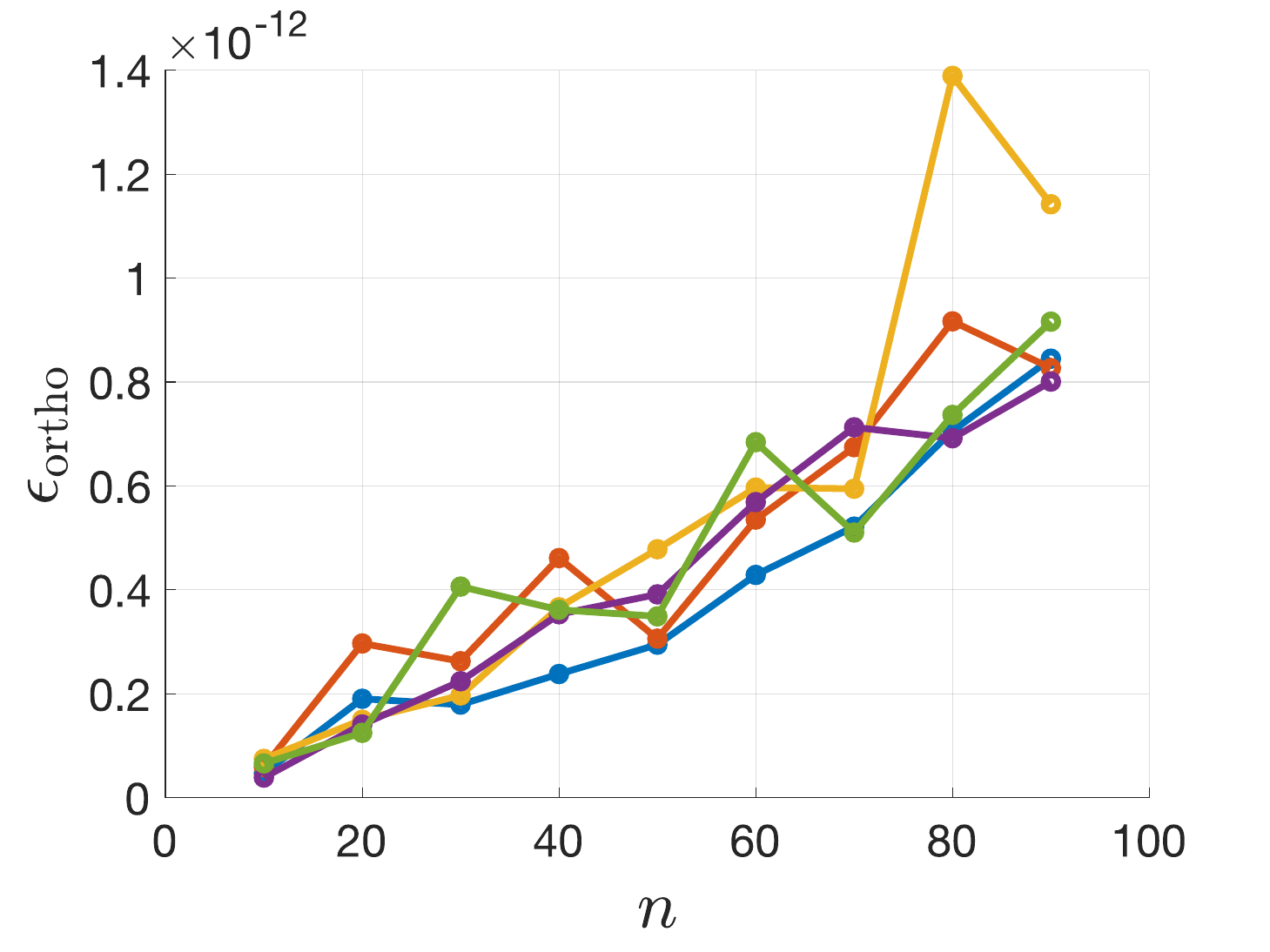}
    \caption{Gram-Schmidt method}
     \label{fig:compar1}
  \end{minipage}
  \hfill
  \begin{minipage}[b]{0.3\textwidth}
    \includegraphics[width=\textwidth]{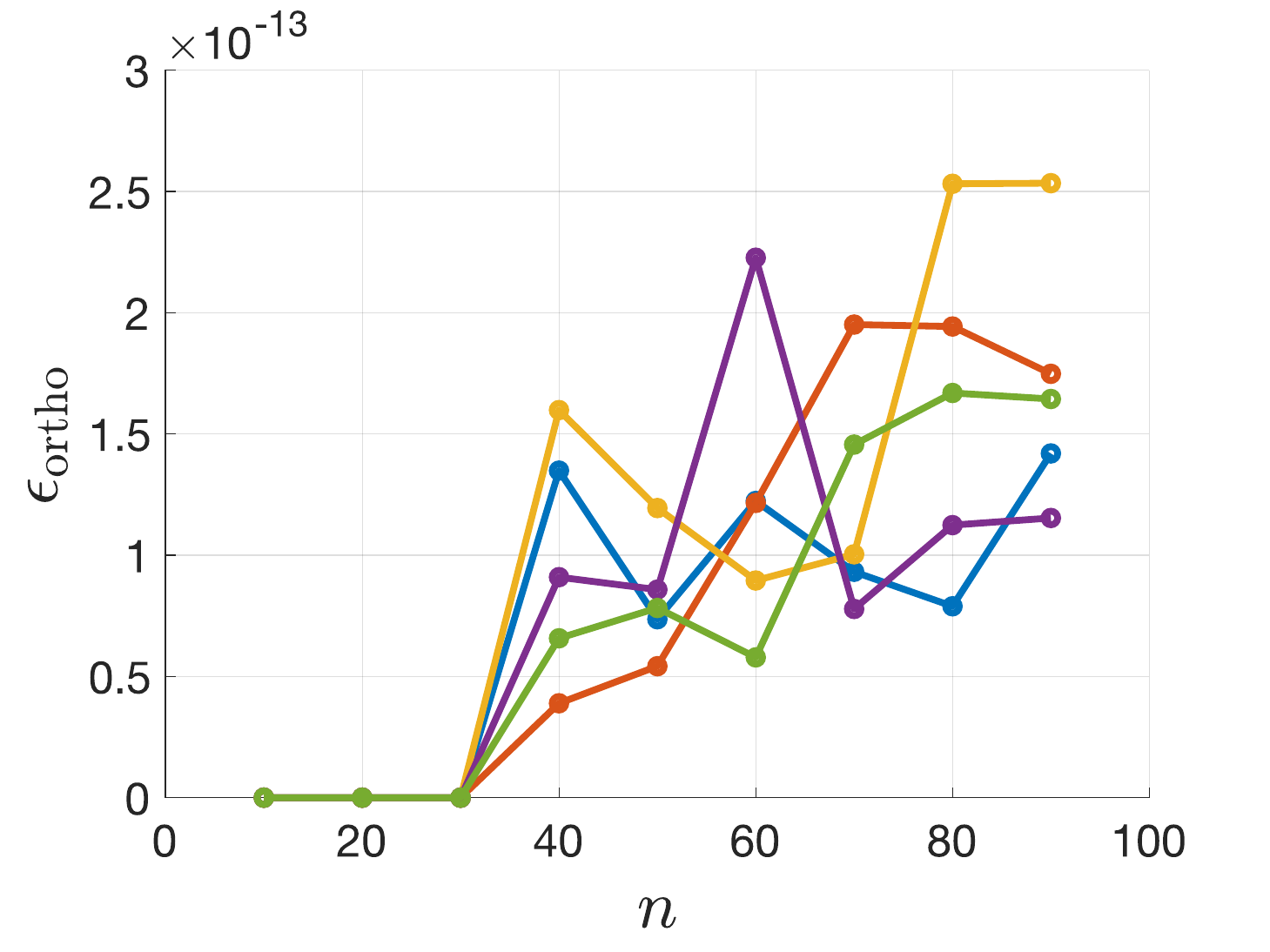}
    \caption{Householder reflections}
     \label{fig:compar2}
  \end{minipage}
  \hfill
  \begin{minipage}[b]{0.3\textwidth}
    \includegraphics[width=\textwidth]{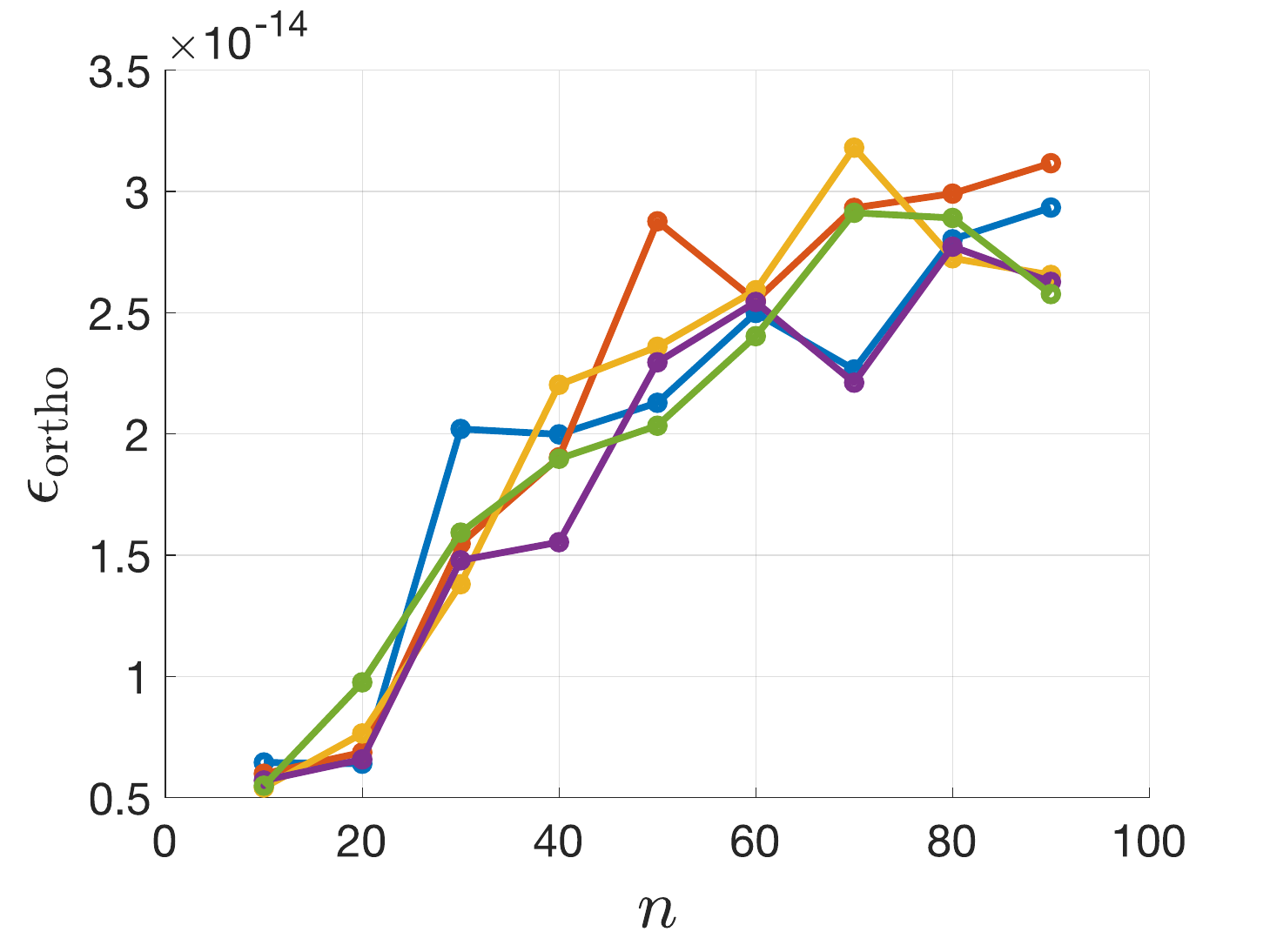}
    \caption{QR decomposition in MATLAB}
      \label{fig:compar3}
  \end{minipage}
\end{figure}

We see that the error increases with an increased number of images. The sequential order of the images generates slight deviations, but the increasing trend remains similar. We can furthermore observe that the numerical errors of the Gram-Schmidt method $(\mathtt{\sim} 10^{-12})$ are $1$ to $2$ magnitudes larger that the orthonormalization errors of the Householder reflections $(\mathtt{\sim} 10^{-13})$ and the QR decomposition in MATLAB $(\mathtt{\sim} 10^{-14})$. So, we choose the QR decomposition for further comparisons.

\subsection{Comparison of QR decomposition with \autoref{eq:gram_matrix_eq}}\label{sec:simulations_QR}
Our goal now is to compare the ``best orthonormalization procedure'', namely the QR decomposition, with the reconstruction via \autoref{eq:gram_matrix_eq}. This comparison is done via the backtransformation of the test data. On the one side, we will compare the two methods in terms of computational efficiency. On the other hand, we will compare the methods in terms of the reconstructed images.

\subsubsection{Computational efficiency}
Here we compare the computational efficiency of \autoref{eq:gram_matrix_eq} with the computational efficiency of the native QR decomposition. In our case, the reconstruction via \autoref{eq:gram_matrix_eq} has clear advantages over the reconstruction via the QR decomposition. For the experiments, a 2,4 GHz 8-Core Intel Core i9 processor is used.

\begin{figure}[H]
  \centering
  \begin{minipage}[b]{0.6\textwidth}
    \includegraphics[width=\textwidth]{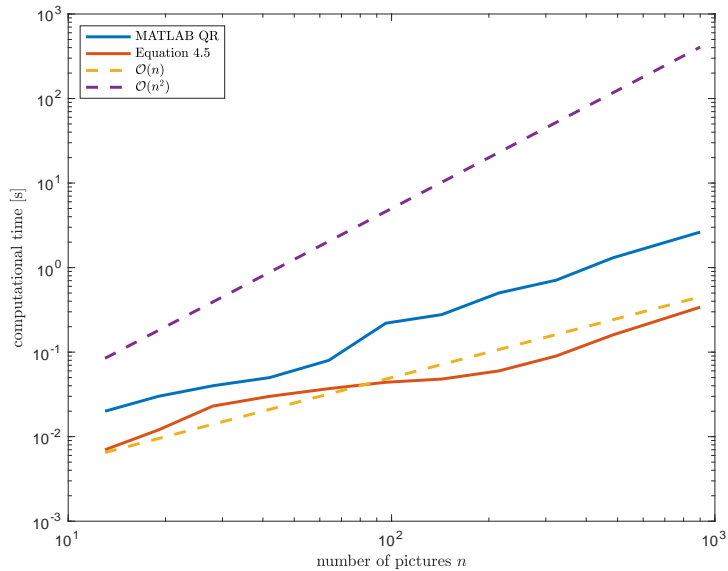}
    \caption{Time comparison}
  \end{minipage}
  
\end{figure}

\subsubsection{Visual observations}
This section observes the reconstructed images of the methods visually. Applying the method on the Radon transforms of the test images, we can compare the original test image with the output of our algorithm. Additionally, we compare the projected Radon transform $Q_n y$ to the Radon transform of the validation data set.
\paragraph{Sunflower data set.} 

We use $n=726$ training images ($150\times 150$ pixels each) of the sunflower data set. Seven additional images, which are not part of the training images are used as test images. These test images contain 4 images with typical motives of sunflowers, where a good approximation on base of the training data is expected and further 3 images with atypical content. On each test image, the reconstruction procedure is applied individually. The results can be seen in Figs.~\ref{fig:test_good} and \ref{fig:test_bad}.\\

We observe a better similarity of the pictures in Fig.~\ref{fig:test_good}, since due to the similarity of sunflowers, the Radon transformations of sunflower motives can be assumed to be closer to the finite dimensional subspace spanned by the training data, than other arbitrary motives. 

Furthermore, we could see that the reconstruction via \autoref{eq:gram_matrix_eq} proceeds at a similar level to the reconstruction via the QR decomposition.  

\paragraph{Digits data set.} 

Similar observations are made with a digits data set \cite{Mnist99} with $n=95$ and $n=995$ training images ($28\times 28$ pixels each) and $5$ test images in Figs.~\ref{fig:digits1}--\ref{fig:digits2}. We see that in the case of $n=995$, the reconstruction via \autoref{eq:gram_matrix_eq} works clearly better than the reconstruction via the QR decomposition. The images are much less blurred.


\begin{figure}[H]
  \centering
  \begin{minipage}[b]{1\textwidth}
    \centering \includegraphics[width=0.10\textwidth,height=0.10\textwidth]{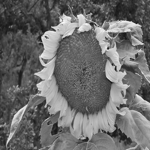}~\includegraphics[width=0.10\textwidth,height=0.10\textwidth]{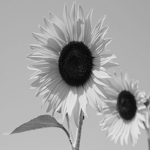}~\includegraphics[width=0.10\textwidth,height=0.10\textwidth]{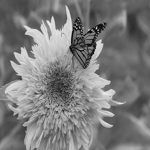}~\includegraphics[width=0.10\textwidth,height=0.10\textwidth]{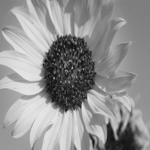}~\includegraphics[width=0.10\textwidth,height=0.10\textwidth]{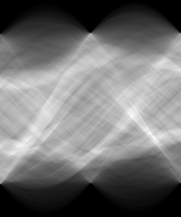}~\includegraphics[width=0.10\textwidth,height=0.10\textwidth]{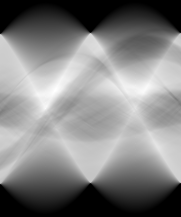}~\includegraphics[width=0.10\textwidth,height=0.10\textwidth]{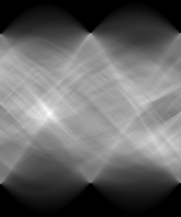}~\includegraphics[width=0.10\textwidth,height=0.10\textwidth]{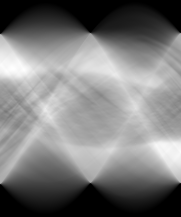}
    \\ (a) Validation data set \\
  \end{minipage}
  \hfill
  \begin{minipage}[b]{1\textwidth}
    \centering \includegraphics[width=0.10\textwidth,height=0.10\textwidth]{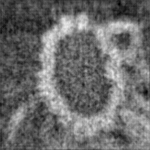}~\includegraphics[width=0.10\textwidth,height=0.10\textwidth]{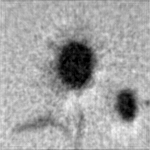}~\includegraphics[width=0.10\textwidth,height=0.10\textwidth]{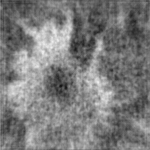}~\includegraphics[width=0.10\textwidth,height=0.10\textwidth]{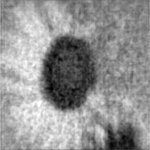}~\includegraphics[width=0.10\textwidth,height=0.10\textwidth]{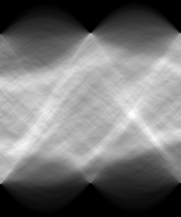}~\includegraphics[width=0.10\textwidth,height=0.10\textwidth]{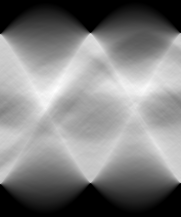}~\includegraphics[width=0.10\textwidth,height=0.10\textwidth]{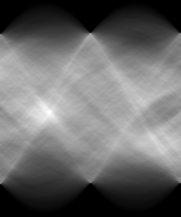}~\includegraphics[width=0.10\textwidth,height=0.10\textwidth]{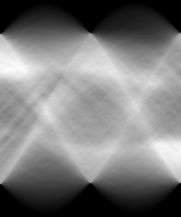}
     \\ (b) QR decomposition \\
  \end{minipage}
  \hfill
  \begin{minipage}[b]{1\textwidth}
    \centering \includegraphics[width=0.10\textwidth,height=0.10\textwidth]{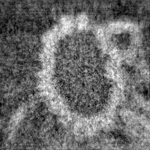}~\includegraphics[width=0.10\textwidth,height=0.10\textwidth]{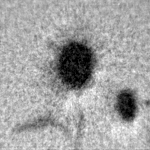}~\includegraphics[width=0.10\textwidth,height=0.10\textwidth]{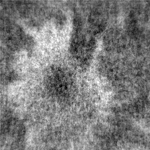}~\includegraphics[width=0.10\textwidth,height=0.10\textwidth]{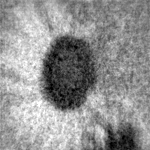}~\includegraphics[width=0.10\textwidth,height=0.10\textwidth]{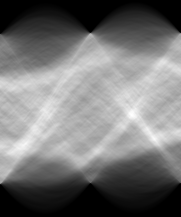}~\includegraphics[width=0.10\textwidth,height=0.10\textwidth]{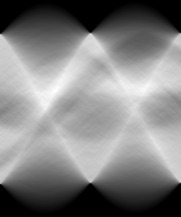}~\includegraphics[width=0.10\textwidth,height=0.10\textwidth]{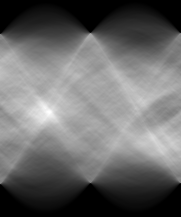}~\includegraphics[width=0.10\textwidth,height=0.10\textwidth]{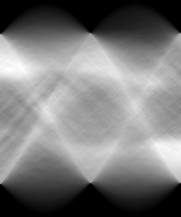}
    \\ (c) Reconstruction with \autoref{eq:gram_matrix_eq} \\

  \end{minipage}
  
  \caption{Reconstructed images via the different methods in comparison to the validation data set for sunflower pictures.}
    \label{fig:test_good}
  
\end{figure}

\begin{figure}[H]
  \centering
  \begin{minipage}[b]{1\textwidth}
    \centering \includegraphics[width=0.10\textwidth,height=0.10\textwidth]{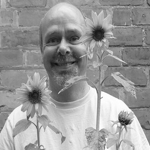}~\includegraphics[width=0.10\textwidth,height=0.10\textwidth]{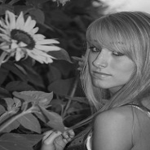}~\includegraphics[width=0.10\textwidth,height=0.10\textwidth]{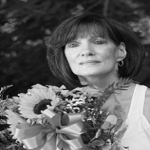}~\includegraphics[width=0.10\textwidth,height=0.10\textwidth]{images/sunflowers/sunflowers_radon1.png}~\includegraphics[width=0.10\textwidth,height=0.10\textwidth]{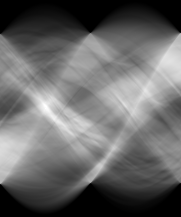}~\includegraphics[width=0.10\textwidth,height=0.10\textwidth]{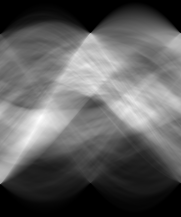}
    \\ (a) Validation data set \\
  \end{minipage}
  \hfill
  \begin{minipage}[b]{1\textwidth}
    \centering \includegraphics[width=0.10\textwidth,height=0.10\textwidth]{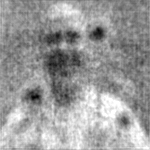}~\includegraphics[width=0.10\textwidth,height=0.10\textwidth]{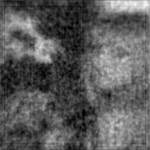}~\includegraphics[width=0.10\textwidth,height=0.10\textwidth]{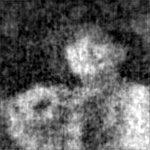}~\includegraphics[width=0.10\textwidth,height=0.10\textwidth]{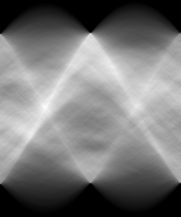}~\includegraphics[width=0.10\textwidth,height=0.10\textwidth]{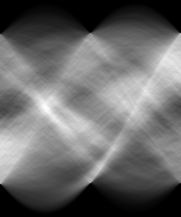}~\includegraphics[width=0.10\textwidth,height=0.10\textwidth]{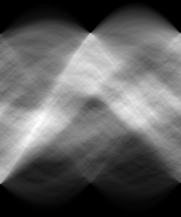}
     \\ (b) QR decomposition \\
  \end{minipage}
  \hfill
  \begin{minipage}[b]{1\textwidth}
    \centering \includegraphics[width=0.10\textwidth,height=0.10\textwidth]{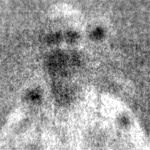}~\includegraphics[width=0.10\textwidth,height=0.10\textwidth]{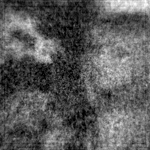}~\includegraphics[width=0.10\textwidth,height=0.10\textwidth]{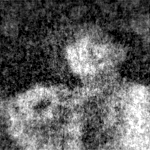}~\includegraphics[width=0.10\textwidth,height=0.10\textwidth]{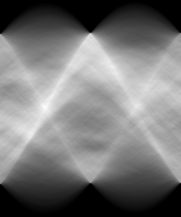}~\includegraphics[width=0.10\textwidth,height=0.10\textwidth]{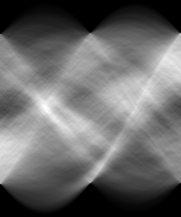}~\includegraphics[width=0.10\textwidth,height=0.10\textwidth]{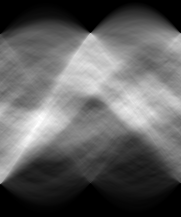}
    \\ (c) Reconstruction with \autoref{eq:gram_matrix_eq} \\

  \end{minipage}
  
  \caption{Reconstructed  images  via  the  different  methods  in  comparison  to  the validation data set for untypical pictures of the data set, which contain a person and where the sunflower motive is only incidental.}
    \label{fig:test_bad}
  
\end{figure}


\begin{figure}[H]
  \centering
  \begin{minipage}[b]{1\textwidth}
    \centering \includegraphics[width=0.10\textwidth,height=0.10\textwidth]{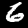}~\includegraphics[width=0.10\textwidth,height=0.10\textwidth]{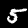}~\includegraphics[width=0.10\textwidth,height=0.10\textwidth]{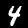}~\includegraphics[width=0.10\textwidth,height=0.10\textwidth]{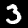}~\includegraphics[width=0.10\textwidth,height=0.10\textwidth]{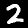}~\includegraphics[width=0.10\textwidth,height=0.10\textwidth]{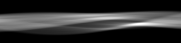}~\includegraphics[width=0.10\textwidth,height=0.10\textwidth]{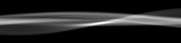}~\includegraphics[width=0.10\textwidth,height=0.10\textwidth]{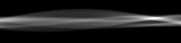}~\includegraphics[width=0.10\textwidth,height=0.10\textwidth]{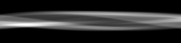}~\includegraphics[width=0.10\textwidth,height=0.10\textwidth]{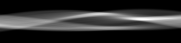}
    \\ (a) Validation data set \\
  \end{minipage}
  \hfill
  \begin{minipage}[b]{1\textwidth}
    \centering \includegraphics[width=0.10\textwidth,height=0.10\textwidth]{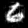}~\includegraphics[width=0.10\textwidth,height=0.10\textwidth]{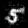}~\includegraphics[width=0.10\textwidth,height=0.10\textwidth]{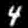}~\includegraphics[width=0.10\textwidth,height=0.10\textwidth]{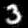}~\includegraphics[width=0.10\textwidth,height=0.10\textwidth]{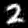}~\includegraphics[width=0.10\textwidth,height=0.10\textwidth]{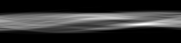}~\includegraphics[width=0.10\textwidth,height=0.10\textwidth]{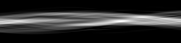}~\includegraphics[width=0.10\textwidth,height=0.10\textwidth]{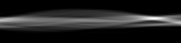}~\includegraphics[width=0.10\textwidth,height=0.10\textwidth]{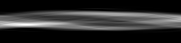}~\includegraphics[width=0.10\textwidth,height=0.10\textwidth]{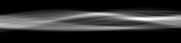}
     \\ (b) QR decomposition \\
  \end{minipage}
  \hfill
  \begin{minipage}[b]{1\textwidth}
    \centering \includegraphics[width=0.10\textwidth,height=0.10\textwidth]{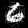}~\includegraphics[width=0.10\textwidth,height=0.10\textwidth]{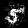}~\includegraphics[width=0.10\textwidth,height=0.10\textwidth]{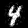}~\includegraphics[width=0.10\textwidth,height=0.10\textwidth]{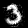}~\includegraphics[width=0.10\textwidth,height=0.10\textwidth]{images/numbers_dataset_100/qr_numbers_dataset5.png}~\includegraphics[width=0.10\textwidth,height=0.10\textwidth]{images/numbers_dataset_100/qr_numbers_dataset_radon1.png}~\includegraphics[width=0.10\textwidth,height=0.10\textwidth]{images/numbers_dataset_100/qr_numbers_dataset_radon2.png}~\includegraphics[width=0.10\textwidth,height=0.10\textwidth]{images/numbers_dataset_100/qr_numbers_dataset_radon3.png}~\includegraphics[width=0.10\textwidth,height=0.10\textwidth]{images/numbers_dataset_100/qr_numbers_dataset_radon4.png}~\includegraphics[width=0.10\textwidth,height=0.10\textwidth]{images/numbers_dataset_100/qr_numbers_dataset_radon5.png}
    \\ (c) Reconstruction with \autoref{eq:gram_matrix_eq} \\

  \end{minipage}
  
  \caption{Number of training images: 95.}
    \label{fig:digits1}
  
\end{figure}



\begin{figure}[H]
  \centering
  \begin{minipage}[b]{1\textwidth}
    \centering \includegraphics[width=0.10\textwidth,height=0.10\textwidth]{images/numbers_dataset_100/numbers_dataset1.png}~\includegraphics[width=0.10\textwidth,height=0.10\textwidth]{images/numbers_dataset_100/numbers_dataset2.png}~\includegraphics[width=0.10\textwidth,height=0.10\textwidth]{images/numbers_dataset_100/numbers_dataset3.png}~\includegraphics[width=0.10\textwidth,height=0.10\textwidth]{images/numbers_dataset_100/numbers_dataset4.png}~\includegraphics[width=0.10\textwidth,height=0.10\textwidth]{images/numbers_dataset_100/numbers_dataset5.png}~\includegraphics[width=0.10\textwidth,height=0.10\textwidth]{images/numbers_dataset_100/numbers_radon1.png}~\includegraphics[width=0.10\textwidth,height=0.10\textwidth]{images/numbers_dataset_100/numbers_radon2.png}~\includegraphics[width=0.10\textwidth,height=0.10\textwidth]{images/numbers_dataset_100/numbers_radon3.png}~\includegraphics[width=0.10\textwidth,height=0.10\textwidth]{images/numbers_dataset_100/numbers_radon4.png}~\includegraphics[width=0.10\textwidth,height=0.10\textwidth]{images/numbers_dataset_100/numbers_radon5.png}
     \\ (a) Validation data set \\
  \end{minipage}
  \hfill
  \begin{minipage}[b]{1\textwidth}
    \centering \includegraphics[width=0.10\textwidth,height=0.10\textwidth]{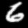}~\includegraphics[width=0.10\textwidth,height=0.10\textwidth]{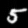}~\includegraphics[width=0.10\textwidth,height=0.10\textwidth]{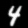}~\includegraphics[width=0.10\textwidth,height=0.10\textwidth]{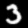}~\includegraphics[width=0.10\textwidth,height=0.10\textwidth]{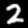}~\includegraphics[width=0.10\textwidth,height=0.10\textwidth]{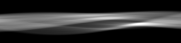}~\includegraphics[width=0.10\textwidth,height=0.10\textwidth]{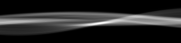}~\includegraphics[width=0.10\textwidth,height=0.10\textwidth]{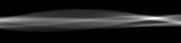}~\includegraphics[width=0.10\textwidth,height=0.10\textwidth]{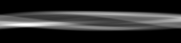}~\includegraphics[width=0.10\textwidth,height=0.10\textwidth]{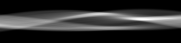}
     \\ (b) QR decomposition \\
  \end{minipage}
  \hfill
  \begin{minipage}[b]{1\textwidth}
    \centering  \includegraphics[width=0.10\textwidth,height=0.10\textwidth]{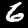}~\includegraphics[width=0.10\textwidth,height=0.10\textwidth]{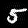}~\includegraphics[width=0.10\textwidth,height=0.10\textwidth]{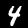}~\includegraphics[width=0.10\textwidth,height=0.10\textwidth]{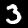}~\includegraphics[width=0.10\textwidth,height=0.10\textwidth]{images/numbers_dataset_1000/qr_numbers_dataset5.png}~\includegraphics[width=0.10\textwidth,height=0.10\textwidth]{images/numbers_dataset_100/qr_numbers_dataset_radon1.png}~\includegraphics[width=0.10\textwidth,height=0.10\textwidth]{images/numbers_dataset_1000/qr_numbers_dataset_radon2.png}~\includegraphics[width=0.10\textwidth,height=0.10\textwidth]{images/numbers_dataset_100/qr_numbers_dataset_radon3.png}~\includegraphics[width=0.10\textwidth,height=0.10\textwidth]{images/numbers_dataset_1000/qr_numbers_dataset_radon4.png}~\includegraphics[width=0.10\textwidth,height=0.10\textwidth]{images/numbers_dataset_1000/qr_numbers_dataset_radon5.png}
    \\ (c) Reconstruction with \autoref{eq:gram_matrix_eq} \\

  \end{minipage}
  
  \caption{Number of training images: 995.}
    \label{fig:digits2}
  
\end{figure}

\section{Conclusions}
We have adapted the projection method applied in frames theory to a data driven reconstruction algorithm for solving inverse problems. We have shown some numerical experiments comparing the reconstruction formula \autoref{eq:gram_matrix_eq} with the one in \autoref{eq:udn2}, proposed by the authors in \cite{AspKorSch20}, which is based on orthonormalization procedures. 
Numerical results based on the frame method are promising, shown that, with a big amount of training pairs, reconstructions are better than those provided by \autoref{eq:udn2}, see, for example, Figure \ref{fig:digits2} and with a lower cost in terms of computational time. Studies on convergence results and rates will be the focus of future works.    
\section*{Acknowledgments}
LF and OS are supported by the FWF via the projects I3661-N27 (Novel Error Measures and Source Conditions of Regularization Methods for Inverse Problems). OS is also supported by FWF via SFB F68, project F6807-N36 (Tomography with Uncertainties). YK acknowledges the support of the EPSRC (Fellowship EP/V003615/1), the Cantab Capital Institute for the Mathematics of Information and the National Physical Laboratory.

\section*{References}
\renewcommand{\i}{\ii}
\printbibliography[heading=none]

\end{document}